\documentclass[11pt]{amsart}
\title[Extended  simplicial rational Nomizu's Theorem]
{Extended  simplicial rational Nomizu's Theorem}

\author{Hisashi Kasuya}

\usepackage{amssymb}
\usepackage{amsmath}
\usepackage{amscd}
\usepackage{amstext}
\usepackage{amsfonts}
\usepackage[all]{xy}

\theoremstyle{plain}

\theoremstyle{plain}

\theoremstyle{plain}

\theoremstyle{plain}
\newtheorem{theorem}{Theorem}[section] 
\theoremstyle{remark}
\newtheorem{remark}{Remark}[section] 
\theoremstyle{remark}
\newtheorem{Important note}[theorem]{Important note}
\theoremstyle{Main result}
\newtheorem{main result}{Main result}
\theoremstyle{lemma}

\theoremstyle{definition}

\theoremstyle{proposition}
\newtheorem{proposition}[theorem]{Proposition}
\theoremstyle{corollary}
\newtheorem{corollary}[theorem]{Corollary}
\theoremstyle{remark}
\newtheorem{example}{Example}
\theoremstyle{plain}
\newtheorem{newexample}{New Examples}

\address[Hisashi Kasuya]{Department of Mathematics, Tokyo Institute of Technology, 1-12-1, O-okayama, Meguro, Tokyo 152-8551, Japan}
\email{kasuya@math.titech.ac.jp}

\keywords{Sullivan's minimal model, polycyclic group,
rational cohomology of algebraic group, simplicial classifying space, simplicial de Rham theory, Borel construction}
\subjclass[2010]{Primary:20F16, 20G15, 55R35, 55P62  Secondary:20G10, 17B45, 55U10}

\newcommand{\C}{\mathbb{C}}
\newcommand{\R}{\mathbb{R}}

\newcommand{\Q}{\mathbb{Q}}
\newcommand{\Z}{\mathbb{Z}}

\newcommand{\n}{\frak{n}}

\begin{document} 

\maketitle
\begin{abstract}

For a torsion-free virtually polycyclic group $\Gamma$,
we give a canonical homomorphism form  certain finite-dimensional cochain complex to the $\Q$-polynomial de Rham complex of the simplicial classifying space $B\Gamma$ which induces a cohomology isomorphism.
By this result, we obtain the Sullivan's  minimal model of certain differential graded algebra  defined on $B\Gamma$ and 
we obtain new examples of hard Lefschetz  symplectic manifolds and hard Lefschetz  contact manifolds.
\end{abstract}

\section{Introduction}

A group $\Gamma$ is polycyclic if it admits a sequence 
\[\Gamma=\Gamma_{0}\supset \Gamma_{1}\supset \cdot \cdot \cdot \supset \Gamma_{k}=\{ e \}\]
of subgroups such that each $\Gamma_{i}$ is normal in $\Gamma_{i-1}$ and $\Gamma_{i-1}/\Gamma_{i}$ is cyclic.
For a polycyclic group $\Gamma$, we denote ${\rm rank}\,\Gamma=\sum_{i=1}^{i=k} {\rm rank}\,\Gamma_{i-1}/\Gamma_{i}$.
Let $\Gamma$ be a torsion-free virtually polycyclic group.
For a representation of $\Gamma$ into a $\Q$-algebraic group $G$,
 if the image $\rho(\Gamma)$ is Zariski-dense in $G$, then  we have $\dim U\le {\rm rank}\,\Gamma$ where $U$ is the  unipotent radical of $G$  (see \cite[Lemma 4.36.]{R}).
We say that  $\rho:\Gamma\to  G$ is a full representation if $\dim U= {\rm rank}\,\Gamma$.

For a  a simplicial complex $K$ with $\pi_{1}K=\Gamma$ and a $\Gamma$-module $V$ with a finite dimensional $\Q$-vector space, considering $V$ as a local system on $K$,
we can define  the $\Q$-polynomial de Rham complex  $A^{\ast}_{p}(K, V)$ of $K$ with values in local system $V$.
We also consider $V=\varinjlim V_{i}$ for an inductive system of finite dimensional $\pi_{1}(K)$-modules $V_{i}$.
We define
\[A^{\ast}_{p}(K, V)=\varinjlim A^{\ast}_{p}(K, V_{i}).
\]

Let $\Gamma$ be a torsion-free virtually polycyclic group.
We consider the following situation:
\begin{itemize}
\item We have a $\Q$-algebraic group $G$ and an injective representation $\rho :\Gamma\to G$ such that the image $\rho(\Gamma)$ is Zariski-dense in $G$.
\item $\rho:\Gamma\to  G$ is a full representation.
\end{itemize}
We consider the simplicial classifying space $B\Gamma$ of a torsion-free virtually polycyclic group $\Gamma$.
For a rational  $G$-module $V$,
we consider the $\Q$-polynomial de Rham complex  $A^{\ast}_{p}(B\Gamma, V)$.
We also consider the complex of "$G$-invariant differential forms" on $U$.
We can take
a splitting $G=T\ltimes U$ such that $U$ is the unipotent radical of $G$ and $T$ is a maximal reductive subgroup of $G$ (see \cite{Mos1}).
For the Lie algebra $\frak u$ of $U$, we consider the cochain complex
$\left(\bigwedge {\frak u}^{\ast}\otimes V\right)^{T}$  of the $T$-invariant elements of the cochain complex of the Lie algebra ${\frak u}$ with values in $V$.
Then, in this paper we show the following result.
\begin{theorem}\label{MINt}
We have an explicit  map $\left(\bigwedge {\frak u}^{\ast}\otimes V\right)^{T}\to  A^{\ast}_{p}(B\Gamma,V)$ which induces a cohomology isomorphism.
\end{theorem}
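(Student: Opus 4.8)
The plan is to prove the theorem by identifying the cohomology of each complex with a known invariant and then exhibiting the stated map as a cochain-level realization of the comparison between these invariants. On the source side, since $T$ is reductive and we work in characteristic zero, the functor of $T$-invariants is exact, so the cohomology of $\left(\bigwedge\frak u^{\ast}\otimes V\right)^{T}$ is the $T$-invariant Lie algebra cohomology $H^{\ast}(\frak u,V)^{T}$. Using the splitting $G=T\ltimes U$ this is the relative Lie algebra cohomology $H^{\ast}(\g,T;V)$, which by Hochschild's computation of the rational cohomology of a reductive-by-unipotent algebraic group coincides with the rational algebraic-group cohomology $H^{\ast}_{\mathrm{alg}}(G,V)$. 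On the target side, the $\Q$-polynomial de Rham theorem for simplicial sets with local coefficients identifies the cohomology of $A^{\ast}_{p}(B\Gamma,V)$ with the group cohomology $H^{\ast}(\Gamma,V)$. Thus the assertion reduces to producing an explicit cochain map whose induced map on cohomology is the comparison
\[
H^{\ast}_{\mathrm{alg}}(G,V)\longrightarrow H^{\ast}(\Gamma,V),
\]
and to proving that this comparison is an isomorphism.

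First I would construct the map through the simplicial de Rham theory of the nerve of $G$. Form the simplicial affine $\Q$-variety $N_{\bullet}G$ whose space of $q$-simplices is $G^{q}$, and take the Bott--Shulman--Dupont total complex of the $\Q$-polynomial de Rham complexes of its terms with coefficients in $V$; this is a model for $A^{\ast}_{p}(BG,V)$. Using the splitting and the $T$-equivariant contraction of $\g$ onto $\frak u$, the invariant forms $\left(\bigwedge\frak u^{\ast}\otimes V\right)^{T}$, which compute $H^{\ast}_{\mathrm{alg}}(G,V)$, inject into this total complex as the relative invariant cochains, as in Dupont's identification of invariant forms with classes on a classifying space. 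The homomorphism $\rho$ induces a simplicial map $B\Gamma=N_{\bullet}\Gamma\to N_{\bullet}G$; pulling back the simplicial forms along $\rho$ and restricting the coefficients along $\rho$ yields the desired map into $A^{\ast}_{p}(B\Gamma,V)$. Concretely, an element of $\left(\bigwedge^{q}\frak u^{\ast}\otimes V\right)^{T}$ is sent to the $\Q$-polynomial form on the $q$-simplices of $B\Gamma$ obtained by evaluating it on the Maurer--Cartan data attached to the edges $\rho(\gamma_{1}),\dots,\rho(\gamma_{q})$; this description is manifestly explicit and natural in $V$.

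The heart of the argument is that this map is a quasi-isomorphism, equivalently that the comparison $H^{\ast}_{\mathrm{alg}}(G,V)\to H^{\ast}(\Gamma,V)$ is an isomorphism; here the hypotheses that $\rho$ is injective, Zariski-dense, and \emph{full} (so that $\dim U={\rm rank}\,\Gamma$) are indispensable. I would argue by induction along a cyclic series of $\Gamma$, matched via Zariski closure with a filtration of $G$ by algebraic subgroups, comparing the Hochschild--Serre spectral sequences of the two cohomology theories stage by stage. Fullness is precisely what forces the unipotent ranks to agree at each stage, so that the image of $\Gamma$ in every unipotent subquotient is a cocompact lattice and the classical Nomizu theorem governs the unipotent directions, while the reductive directions contribute only through invariants and produce no discrepancy; a non-simplicial form of this comparison isomorphism can be invoked from the Nomizu-type theory of solvmanifolds.

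I expect the main obstacle to be not the abstract existence of an isomorphism but the verification that the explicit map of the second step actually \emph{induces} the comparison, rather than merely coexisting with it. This requires checking, page by page, that the Dupont pullback along $\rho$ carries the invariant-cohomology filtration of $\left(\bigwedge\frak u^{\ast}\otimes V\right)^{T}$ to the Hochschild--Serre filtration on $H^{\ast}(\Gamma,V)$, and that it is compatible with base change in $V$; organizing this compatibility so that the two spectral sequences are identified by the map constructed above is where the bulk of the technical work will lie.
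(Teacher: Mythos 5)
Your construction of the map and your overall reduction coincide with the paper's: the paper builds $\psi:\left(\bigwedge {\frak u}^{\ast}\otimes V\right)^{T}\to A^{\ast}_{p}(B\Gamma,V)$ by pulling back invariant forms along ``geodesic'' simplices $\sigma^{p}(\rho(\gamma_{0}),\dots,\rho(\gamma_{p}))$ in $U$ defined via $\exp$ and $\log$ (a simplicial Van Est construction in the spirit of \cite{Shul}, which is exactly your Maurer--Cartan/nerve picture), and it uses the polynomial de Rham theorem with local coefficients to reduce everything to the comparison $\rho^{\ast}:H^{\ast}(G,V)\to H^{\ast}(\Gamma,V)$. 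One place where you overestimate the difficulty: verifying that $\psi$ induces $\rho^{\ast}$ requires no spectral-sequence or filtration bookkeeping. The paper introduces a second integration map $\theta:\left(\bigwedge {\frak u}^{\ast}\otimes V\right)^{T}\to C^{\ast}(G,V)^{G}$, given by integrating $\sigma^{p}(g_{0},\dots,g_{p})^{\ast}\omega$ over the standard simplex, which realizes Hochschild's isomorphism $H^{\ast}({\frak u},V)^{T}\cong H^{\ast}(G,V)$; the square formed by $\psi$, $\theta$, the simplicial integration $\iota$, and $\rho^{\ast}$ then commutes at the cochain level essentially by construction (Stokes' theorem), and this is the paper's Corollary identifying $\psi^{\ast}$ with $\rho^{\ast}$.

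The genuine gap is in your treatment of the key input, the bijectivity of $\rho^{\ast}$. The paper does not prove this; it quotes it as a theorem from \cite{KCT}. Your proposed proof --- induction along a cyclic series, with fullness forcing ``the image of $\Gamma$ in every unipotent subquotient to be a cocompact lattice'' so that Nomizu's theorem \cite{Nom} handles the unipotent directions --- breaks down at its central claim. For a full representation of a non-nilpotent $\Gamma$, the group $\Gamma$ sits diagonally across $T$ and $U$ and is not, even virtually, a lattice in $U$: in the paper's own example, $\Gamma=\Z\ltimes_{A}\Z^{m}$ embeds in $G=T\ltimes\Q^{m+1}$ by $(t,v)\mapsto(B^{t},(t,v))$, which is full since $\dim U=m+1={\rm rank}\,\Gamma$, yet $\Gamma\cap U\cong\Z^{m}$ has rank $m<\dim U$, and no subgroup of $\Gamma$ has Zariski closure equal to $U$. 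Consequently the inductive steps cannot be split into ``unipotent directions governed by Nomizu'' plus ``reductive directions contributing only invariants'': each rank-one stage is itself a diagonal embedding such as $\Z\ni t\mapsto(B^{t},t)\in T\ltimes\Q$, and proving the comparison already there requires Jordan-decomposition and weight arguments of Mostow type, together with the fact that the coefficient modules arising at each stage carry compatible rational structures --- precisely the content of \cite{KCT}. Your fallback of invoking ``Nomizu-type theory of solvmanifolds'' does not supply this either, since those results concern real coefficients and do not cover arbitrary rational $G$-modules $V$ or virtually polycyclic $\Gamma$. In addition, fullness of $\rho$ does not automatically pass to the restrictions of $\rho$ to the terms of a cyclic series, so even the setup of your induction needs justification. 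As it stands, the deepest step of the theorem is asserted rather than proved.
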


\begin{remark}
It is known that a $\Q$-algebraic group $G$ and an injective representation $\rho :\Gamma\to G$  so that
\begin{itemize}
\item We have a $\Q$-algebraic group $G$ and an injective representation $\rho :\Gamma\to G$ such that the image $\rho(\Gamma)$ is Zariski-dense in $G$.
\item $\rho:\Gamma\to  G$ is a full representation.
\item  The centralizer $Z_{G}(U)$ of $U$ is contained in $U$.
\end{itemize}
 exist and such $G$ is unique up to isomorphism of $\Q$-algebraic groups (\cite[Appendix A.]{B}).
Such $G$ is called the algebraic hull of $\Gamma$.

For a representation $\phi:\Gamma\to GL(V)$ with a finite dimensional $\Q$-vector space $V$,
consider the representation $\rho\times \phi:\Gamma\to G\times GL(V)$ and the take the Zariski-closure $G^{\phi}$ of the its image.
Then the representation $\Gamma\to G^{\phi}$ is also a full representation and $\phi:\Gamma\to GL(V)$ is extended to  a rational representation $G^{\phi}\to GL(V)$ (see \cite[Section 3]{KCT}).
Thus, for any representation $\phi:\Gamma\to GL(V)$ with a finite dimensional $\Q$-vector space $V$, applying Theorem \ref{MINt}, the cohomology of the $\Q$-polynomial de Rham complex  $A^{\ast}_{p}(B\Gamma, V)$ is computed by the finite-dimensional cochain complex $\left(\bigwedge {\frak u}^{\ast}\otimes V\right)^{T}$.
\end{remark}

Theorem \ref{MINt} can be regarded as a generalization of  simplicial rational version of Nomizu's Theorem (\cite{Nom}, \cite{LP}).
Let $N$ be a simply connected nilpotent Lie group and $\frak n$ be the Lie algebra of $N$.
We suppose that  $N$ has a lattice (i.e. cocompact discrete subgroup) $\Gamma$.
We consider the nilmanifold $\Gamma\backslash N$.
Then, considering the cochian complex $\bigwedge \frak n^{\ast}$ of the Lie algebra as the space of the invariant differential forms, in \cite{Nom}, Nomizu proves that
the canonical inclusion $\bigwedge \frak n^{\ast}\subset A^{\ast}(\Gamma\backslash N)$ induces a cohomology isomorphism
\[H^{\ast}(\n,\R)\cong H^{\ast}(\Gamma\backslash N,\R)
\]
where $A^{\ast}(\Gamma\backslash N)$ is the de Rham complex of $\Gamma\backslash N$.

In \cite{LP}, Lambe and Priddy give a  simplicial rational version of  Nomizu's theorem.
For simply connected nilpotent Lie group $N$ with a lattice $\Gamma$, $\Gamma$ is a torsion-free finitely generated nilpotent group and a nilmanifold $\Gamma\backslash N$ is an aspherical manifold with the fundamental group $\Gamma$.
Conversely any torsion-free finitely generated nilpotent group $\Gamma$ can be embedded in a simply connected nilpotent Lie group $N$ whose Lie algebra $\frak n$ admits a $\Q$-structure $\frak n_{\Q}$ (see \cite{R}).
In \cite{LP}, considering the simplicial classifying space $B\Gamma$ of a torsion-free finitely generated nilpotent group $\Gamma$ and the $\Q$-polynomial de Rham complex  $A^{\ast}_{p}(B\Gamma,\Q)$,  Lambe and Priddy construct an explicit map $\bigwedge \frak n_{\Q}\to A^{\ast}_{p}(B\Gamma,\Q)$ which induces a cohomology isomorphism.
A simply connected nilpotent Lie group $N$ can be considered as a real unipotent algebraic group with a $\Q$-structure  $N(\Q)$ so that $\Gamma\subset N(\Q)$.
Regarding a torsion-free finitely generated nilpotent group $\Gamma$ as a polycyclic group, we can say that the $\Q$-algebraic group $N(\Q)$ is the algebraic hull of $\Gamma$ (see \cite{R}).

Nomizu's theorem gives an important fact on the theory of Sullivan's minimal model.
We can say that the Differential graded algebra (shortly DGA) $\bigwedge \frak n^{\ast}$ is the  minimal model of  $A^{\ast}(\Gamma\backslash N)$ (see \cite{H}).

We can generalize this fact.
For a  torsion-free virtually polycyclic group $\Gamma$,
take $G$ the algebraic hull of $\Gamma$ and a splitting $G=T\ltimes U$ for a maximal reductive subgroup $T$.
Denote by $\frak u$  the Lie algebra of  the unipotent hull $U$ of $\Gamma$.
Consider the the coordinate ring $\Q[T]$ of the algebraic group $T$.
By the composition $\Gamma\to G\to T$ where $G\to T$ is the projection,
since $\Q[T]$ is a rational $T$-module, we regard $\Q[T]$ as a $\Gamma$-module.
Consider the $\Q$-polynomial de Rham complex  $A^{\ast}_{p}(B\Gamma,\Q[T])$.
Then $A^{\ast}_{p}(B\Gamma,\Q[T])$ is a differential graded algebra (DGA).
By Theorem \ref{MINt}, we have the following result.

\begin{theorem}\label{polmi}
We have an explicit DGA map $\bigwedge {\frak u}^{\ast}\to  A^{\ast}_{p}(B\Gamma,\Q[T])$ which induces a cohomology isomorphism.
Hence $\bigwedge{\frak u}^{\ast}$ is the minimal model of $A^{\ast}_{p}(B\Gamma,\Q[T])$.
\end{theorem}

See \cite{Kas} for the similar result on solvmanifolds where a solvmanifold is the compact  quotient $\Gamma\backslash S$
of a simply connected solvable Lie group $S$ by a lattice $\Gamma$.

An infra-solvmanifold is a manifold of the form $G/\Delta$, where $G$ is a simply connected solvable Lie group, and $\Delta$ is a torsion-free subgroup of ${\rm Aut}(G)\ltimes G$ such that the closure of $h(\Delta)$ in ${\rm Aut}(G)$ is compact where $h:{\rm Aut}(G)\ltimes G\to {\rm Aut}(G)$ is the projection.
An infra-solvmanifold is a  generalization of a solvmanifold.
An infra-solvmanifold is a aspherical manifold with a virtually polycyclic group.
Theorem \ref{MINt} is useful for finding symplectic structures on infra-solvmanifolds $M$ whose cohomology classes in $H^{2}(M,\Z)$.
By finding such symplectic infra-solvmanifolds,
we obtain the following results.
\begin{theorem}
We obtain the following new examples.
\begin{itemize}
\item Symplectic blow-ups of   complex projective spaces along 
certain embedded infra-solvmanifolds which satisfy the hard Lefschetz properties.
(But, it is not clear whether these manifolds admit K\"ahler metrics.)

\item Non Sasakian contact manifolds which satisfy the hard Lefschetz properties in the sense of \cite{CNY}, \cite{Li}.

\end{itemize}

\end{theorem}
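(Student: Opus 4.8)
The plan is to use Theorem \ref{MINt} as a computational device. For an infra-solvmanifold $M$ with virtually polycyclic fundamental group $\Gamma$ and algebraic hull $G=T\ltimes U$, Theorem \ref{MINt} (applied to the relevant $G$-modules $V$) identifies the de Rham cohomology of $M$ with the cohomology of the finite-dimensional complex $\left(\bigwedge{\frak u}^{\ast}\otimes V\right)^{T}$, and it realizes cohomology classes by explicit $T$-invariant forms on ${\frak u}$. First I would produce explicit infra-solvmanifolds carrying an invariant closed $2$-form $\omega$ (in the symplectic case) or an invariant contact form (in the contact case) whose associated cohomology class is rational, hence, after rescaling, integral. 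Since the relevant Lefschetz operators are cup products with $[\omega]$, they are induced by the wedge operator on the finite model, so the hard Lefschetz property---both the symplectic one and the contact one in the sense of \cite{CNY}, \cite{Li}---reduces to a finite linear-algebra verification on $\left(\bigwedge{\frak u}^{\ast}\otimes V\right)^{T}$.

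For the symplectic blow-ups I would start from such an $(M,\omega)$ with $[\omega]\in H^{2}(M,\Z)$ and apply Tischler's embedding theorem to obtain a symplectic embedding $M\hookrightarrow (\C P^{N},\omega_{FS})$ for $N$ large. McDuff's symplectic blow-up along the image then yields a closed symplectic manifold $\widetilde{X}$; since the codimension is at least four, $\widetilde{X}$ is simply connected. The ambient $\C P^{N}$ is K\"ahler and therefore hard Lefschetz, and $M$ is hard Lefschetz by the finite-model computation, so Cavalcanti's blow-up formula together with his criterion for the persistence of the hard Lefschetz property under blowing up along a hard Lefschetz symplectic submanifold shows that $\widetilde{X}$ is again hard Lefschetz. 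The significant point is that, unlike McDuff's original non-K\"ahler examples (which are detected by the failure of hard Lefschetz), these $\widetilde{X}$ do satisfy hard Lefschetz, so the standard cohomological obstruction to the existence of a K\"ahler metric is unavailable---this is why K\"ahlerity is left undecided.

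For the contact examples I would produce odd-dimensional contact infra-solvmanifolds and compute, again on the finite model $\left(\bigwedge{\frak u}^{\ast}\otimes V\right)^{T}$, the cohomology and the contact Lefschetz operators, verifying the hard Lefschetz property in the sense of \cite{CNY}, \cite{Li}. To show that these manifolds are not Sasakian, I would extract from the same finite model a cohomological obstruction to the Sasakian condition---for instance the parity of $b_{1}$, or the nonvanishing of a suitable Massey product---exploiting the fact that compact Sasakian manifolds satisfy constraints strictly stronger than the contact hard Lefschetz property. Because $b_{1}$, the cup-product structure, and Massey products are all readable off the finite model, non-Sasakianity is again reduced to a finite check.

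The hard part will be the explicit construction of the infra-solvmanifolds: one must choose $\Gamma$, the splitting $G=T\ltimes U$, and the invariant forms so that simultaneously the form is nondegenerate on ${\frak u}$, its class is rational and lies in the image of the finite model, and the Lefschetz maps are isomorphisms. The reductive factor $T$ is the main source of difficulty, since the passage to $T$-invariants in $\left(\bigwedge{\frak u}^{\ast}\otimes V\right)^{T}$ both shrinks the cohomology and can break the naive Lefschetz symmetry, so balancing nondegeneracy, integrality, and hard Lefschetz is delicate. The remaining technical points are to confirm that Cavalcanti's compatibility hypotheses hold for the particular Tischler embedding used, and that the chosen Sasakian obstruction genuinely survives in the contact setting.
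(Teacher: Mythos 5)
Your symplectic half follows the paper's route in outline (integral class on an infra-solvmanifold, Gromov--Tischler symplectic embedding into $\C P^{N}$, McDuff blow-up, Cavalcanti's theorem \cite[Theorem 2.2]{Cav} for persistence of hard Lefschetz), but it glosses a step the paper handles explicitly: an element of $\left(\bigwedge^{2}{\frak u}^{\ast}\right)^{T}$ is \emph{not} a differential form on $M_{\Gamma}$ --- Theorem \ref{MINt} lives on the simplicial classifying space $B\Gamma$, and $M_{\Gamma}$ only enters through homotopy equivalence and smooth rigidity. So a closed nondegenerate invariant $2$-form $\omega$ in the finite model yields only a class $a\in H^{2}(M_{\Gamma},\Q)$ with $a^{n}\neq 0$; to get an honest symplectic form representing $a$ the paper invokes the cohomologically-symplectic-implies-symplectic theorem for these manifolds (\cite{Ksy}). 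Your phrase ``infra-solvmanifolds carrying an invariant closed $2$-form'' skips this bridge. The paper also takes hard Lefschetz and formality of $M_{\Gamma}$ from \cite[Proposition 1.1, Proposition 1.4]{Kfo} rather than an ad hoc check, though your finite-model verification of cup-product Lefschetz maps is legitimate in principle since the model computes the ring structure.

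The contact half has a genuine gap. The paper does not construct contact infra-solvmanifolds at all: it forms the Boothby--Wang principal circle bundle $P_{\Gamma}\to M_{\Gamma}$ associated with the integral class $[\omega]\in H^{2}(M_{\Gamma},\Z)$, which is automatically a regular contact manifold, then quotes Lin's theorem \cite{Li} that $P_{\Gamma}$ satisfies the contact hard Lefschetz property if and only if $(M_{\Gamma},\omega)$ satisfies the symplectic one, and rules out Sasakian structures by the group-theoretic result of \cite{KSS} (polycyclic fundamental groups of compact Sasakian manifolds are virtually nilpotent), since $\pi_{1}(P_{\Gamma})$ surjects onto the non-virtually-nilpotent $\Gamma$. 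Your plan to ``produce contact infra-solvmanifolds and verify the contact Lefschetz operators on the finite model'' fails as stated: a contact form is nowhere closed, so it defines no cohomology class and can neither be produced from nor tested in the cohomological model $\left(\bigwedge{\frak u}^{\ast}\otimes V\right)^{T}$; moreover the Lefschetz maps of \cite{CNY}, \cite{Li} are defined through the contact form and Reeb field, not through cup product alone, so the property is not a finite linear-algebra condition on the model. Your fallback non-Sasakian obstructions ($b_{1}$ parity, Massey products) are real constraints, but you never exhibit a contact structure to which they would apply; the Boothby--Wang construction plus Lin's equivalence is the missing idea, and the \cite{KSS} fundamental-group obstruction is what makes non-Sasakianity immediate.
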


\section{Cohomology of algebraic groups}\label{SAL}
The purpose of this section is to construct an explicit cochain complex homomorphism which induces the  Hochschild isomorphism on the rational cohomology of an algebraic group.
For this construction, we are inspired by the simplicial construction of Van Est isomorphism on the continuous cohomology of a Lie group (see \cite{Shul}). 
Let $G$ be a $\Q$-algebraic group.
A  $G$-module is called rational if it is  the sum of finite-dimensional $G$-stable subspaces $\{V_{i}\}$ so that each $V_{i}$ comes from a rational representation. 
For a rational  $ G$-module $V$, we define the rational cohomology $H^{\ast}( G,V)={\rm Ext}^{\ast}_{ G}(\Q,V)$ as \cite{Hoc} and \cite{Jan}.
We have the standard resolution. 
We denote by $C^{p}( G,V)$ the set of the $V$-valued rational functions on $\underbrace{G\times \dots\times G }_{p+1}$ with the left-$ G$-action.
Consider the sequence
\[\xymatrix{
V\ar[r]& C^{0}(G,V)\ar^{d}[r]&C^{1}( G,V)\ar^{d}[r]&\dots
 }
\] 
such that the first map $V\to C^{0}( G,V)$ is the embedding as  constant functions and $d:C^{p}( G,V)\to C^{p+1}( G,V)$ is given by
\[d\phi (g_{0},\dots,g_{p+1})=\sum (-1)^{i}\phi(g_{0},\dots, {\hat g}_{i},\dots, g_{p+1})
\]
for $\phi\in C^{p}( G,V)$, $g_{0},\dots, g_{p+1}\in G$.
Then the rational cohomology $H^{\ast}(G,V)$ is the cohomology of the cochain complex $C^{\ast}( G,V)^{ G}$.
For the unipotent radical $U$ of $G$ and a maximal reductive subgroup $T$, we have a splitting $G=T\ltimes U$ (\cite{Mos1}).
Let $\frak u$ be the Lie algebra of $U$.
Hochschild showed that we have an isomorphism
\[H^{\ast}({\frak u}, V)^{T}\cong H^{\ast}(G,V).
\]
The purpose of this section is to represent this isomorphism 
 as a cochain complex homomorphism.

Let $A^{\ast}_{a}(U)$ be the algebraic de Rham complex of the algebraic variety $U$.
Consider the coordinate ring $\Q[U]$ of $U$.
By the spectral sequence as \cite[Proposition 3.4]{Haia}, we have $H^{1}(G,\Q[U] \otimes W)=H^{1}(U,\Q[U] \otimes W)^T=0$ for any finite dimensional rational $G$-module $W$.
This implies that $\Q[U]$   is an injective $G$-module (see \cite{Hoi})
We have $A^{\ast}_{a}(U)\cong {\rm Hom}_{\Q}(\bigwedge {\frak u}, \Q[U])$  and hence $A^{\ast}_{p}(U)\otimes V$ is an injective $G$-module (see \cite{Jan}).
Since the exponential map $\exp: {\frak u}\to U$ is an isomorphism of $\Q$-algebraic variety,  we have $H^{0}(A^{\ast}_{a}(U)\otimes V)=V$ and $H^{\ast}(A^{\ast}_{a}(U)\otimes V)=0$ for $\ast>0$.
Hence the sequence 
 \[\xymatrix{
V\ar[r]& A_{a}^{0}(U)\otimes V\ar^{d}[r]&A_{a}^{1}(U)\otimes V\ar^{d}[r]&\dots
 }
\] 
is an injective resolution.

By a splitting $G=T\ltimes U$, we have the homomorphism $\alpha : G\to {\rm Aut}(U)\ltimes U$.
We define the map 
\[\sigma^{p}(\cdot)(\cdot):\underbrace{G\times \dots\times G }_{p+1}\times \Q^{p}\to U\]
 such that
$\sigma ^{0}(g_{0})(0)=\alpha(g_{0})e$, $\sigma^{1}(g_{0},g_{1})(t_{1})=\alpha(g_{0}){\rm exp}((1-t_{1}){\rm log} \alpha(g_{0}^{-1}g_{1})e$ and inductively 
\[\sigma^{p}(g_{0},\dots, g_{p})(t_{1},\dots, t_{p})=\alpha(g_{0}){\rm exp}((1-t_{1}){\rm log} \sigma^{p-1}(g_{0}^{-1}g_{1},\dots, g_{0}^{-1}g_{p})(t_{2},\dots, t_{p})).
\]
It is known that the exponential map ${\rm exp}:\frak u\to U$ is an isomorphism of $\Q$-algebraic variety and ${\rm log} :U\to \frak u$ is the inverse.
Hence  the map 
\[\sigma^{p}:\underbrace{G\times \dots\times G }_{p+1}\times \Q^{p}\to U\]
 is a  homomorphism of $\Q$-algebraic variety such that 
for any $(t_{1},\dots,t_{p})$, the map 
\[\sigma^{p}(\cdot)(t_{1},\dots,t_{p}):\underbrace{G\times \dots\times G }_{p+1} \to U\]
 is a $G$-equivariant map.
We note 
\begin{multline*}
\sigma^{p}(g_{0},\dots,g_{p})(t_{1},\dots t_{i-1},0,t_{i},\dots t_{p-1})\\
=\sigma^{p-1}(g_{0},\dots ,g_{i-1},g_{i+1},\dots ,g_{p})(t_{1},\dots, t_{p-1}).
\end{multline*}

 For  $(g_{0},\dots,g_{p})$  and $\omega\in A^{\ast}_{a}(U)\otimes V$, considering the map $\sigma^{p}(g_{0},\dots,g_{p})(\cdot):\Q^{p} \to U\cong {\frak u}$ which is a homomorphism of  $\Q$-algebraic variety, we have the $\Q$-algebraic differential form $\sigma^{p}(g_{0},\dots,g_{p})^{\ast}\omega$ for parameters $(t_{1},\dots,t_{p})\in \Q^{p}$.
We regard $\sigma^{p}(g_{0},\dots,g_{p})^{\ast}\omega$ as a $\Q$-polynomial differential form on $\R^{p}$.
We define the map $\theta : A^{p}_{a}(U)\otimes V\to C^{p}(G,V)$ such that 
\[\theta (\omega)(g_{0},\dots,g_{p})=\int_{\Delta}\otimes {\rm id}_{V}\sigma^{p}(g_{0},\dots,g_{p})^{\ast}\omega\]
 where 
\[\Delta=\left\{(1-t_{1}-\dots-t_{p},t_{1},\dots,t_{p})\vert 0\le t_{i}\le 1  \right\}
\] is the standard $p$-simplex for the parameters $(t_{1},\dots,t_{p})$.
Then we can easily show that 
the map $\theta :   A^{\ast}_{a}(U)\otimes V\to C^{\ast}(G, V)$ is $G$-equivariant cochain complex homomorphism
by  Stokes' theorem. (cf.  \cite[Section 3]{Shul})

Now we have $(A^{\ast}_{a}(U)\otimes V)^{G}=\left(\bigwedge {\frak u}^{\ast}\otimes V\right)^{T}$
 where $\bigwedge {\frak u}^{\ast}\otimes V$ is the cochain complex of the Lie algebra ${\frak u}$ with values in the $\frak u$-module $V$.
Consider the restriction $\theta: \left(\bigwedge {\frak u}^{\ast}\otimes V\right)^{T}\to C^{\ast}(G, V)^{G}$.
Then the induced map $\theta:H^{\ast}({\frak u}, V)^{T}\to H^{\ast}(G,V)$ is identified with the map  ${\rm Ext}^{\ast}_{\mathcal G}(\Q,V)\to {\rm Ext}^{\ast}_{\mathcal G}(\Q,V)$ induced by the identity map $V\to V$.
Hence we have the following result.
\begin{theorem}\label{ALGCO}
The map $\theta: \left(\bigwedge {\frak u}^{\ast}\otimes V\right)^{T}\to C^{\ast}(G, V)^{G}$ induces a cohomology isomorphism
\[H^{\ast}({\frak u}, V)^{T}\cong H^{\ast}(G,V).
\]
\end{theorem}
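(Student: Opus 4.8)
The plan is to deduce Theorem~\ref{ALGCO} from the uniqueness part of the comparison theorem for injective resolutions in the category of rational $G$-modules. The idea is that $\theta$ is a $G$-equivariant cochain map between two injective resolutions of $V$ which lifts the identity $\mathrm{id}_{V}$, and therefore must induce the identity on $\mathrm{Ext}^{\ast}_{G}(\Q,V)$; reading off this identity through the two invariant complexes gives exactly the desired isomorphism $H^{\ast}({\frak u},V)^{T}\cong H^{\ast}(G,V)$.

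First I would check that both complexes in sight are injective resolutions of $V$. For the source this is already in hand: it has been shown that $\Q[U]$, and hence each $A^{p}_{a}(U)\otimes V$, is an injective $G$-module, and that the exponential map makes the augmented complex $V\to A^{\ast}_{a}(U)\otimes V$ exact. For the target I would recall the standard fact that each $C^{p}(G,V)$ is an injective rational $G$-module (up to the usual untwisting it is a module of the form $\Q[G]\otimes W$ on which $G$ acts through the regular representation of the first factor), so that the standard complex $V\to C^{\ast}(G,V)$ is likewise an injective resolution. Next I would verify that $\theta$ is compatible with the two augmentations from $V$: for $v\in V$ viewed as the constant $1\otimes v\in A^{0}_{a}(U)\otimes V$, the integral $\theta(1\otimes v)(g_{0})=\int_{\Delta}\sigma^{0}(g_{0})^{\ast}(1\otimes v)$ over the $0$-simplex is mere evaluation of a constant, so $\theta(1\otimes v)$ is the constant function $v\in C^{0}(G,V)$. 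Thus $\theta$ is a chain map covering $\mathrm{id}_{V}$.

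With these in place the conclusion is formal. By the comparison theorem, any two $G$-equivariant chain maps between these resolutions that cover $\mathrm{id}_{V}$ are $G$-equivariantly homotopic, and each induces the canonical identity map on $\mathrm{Ext}^{\ast}_{G}(\Q,V)$ after applying the invariants functor $(-)^{G}=\mathrm{Hom}_{G}(\Q,-)$. Applying $(-)^{G}$ to the target gives $C^{\ast}(G,V)^{G}$, whose cohomology is $H^{\ast}(G,V)$ by definition, while applying it to the source gives $(A^{\ast}_{a}(U)\otimes V)^{G}=\left(\bigwedge{\frak u}^{\ast}\otimes V\right)^{T}$; since $T$ is reductive the functor of $T$-invariants is exact and commutes with cohomology, so the latter has cohomology $H^{\ast}({\frak u},V)^{T}$. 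The map induced by $\theta$ is then the identity of $\mathrm{Ext}^{\ast}_{G}(\Q,V)$ under these two identifications, hence an isomorphism.

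The step demanding the most care is the verification that the target $C^{\ast}(G,V)$ is genuinely an injective resolution, since it is this injectivity, rather than anything specific about the geometric construction of $\theta$, that powers the comparison theorem; everything else (the source being an injective resolution, the $G$-equivariance and cochain property of $\theta$ via Stokes, and the compatibility of augmentations) is either already established in the preceding discussion or an immediate check. Once the injectivity is confirmed, no further computation with the explicit simplices $\sigma^{p}$ is needed, as the isomorphism follows purely from homological algebra.
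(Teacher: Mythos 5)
Your proposal is correct and follows essentially the same route as the paper: both arguments view $\theta$ as a $G$-equivariant cochain map between the injective resolution $A^{\ast}_{a}(U)\otimes V$ and the standard resolution $C^{\ast}(G,V)$ lifting $\mathrm{id}_{V}$, so that after applying $(-)^{G}$ it induces the identity on $\mathrm{Ext}^{\ast}_{G}(\Q,V)$, hence an isomorphism $H^{\ast}({\frak u},V)^{T}\cong H^{\ast}(G,V)$. You merely make explicit the details the paper leaves implicit (injectivity of the standard resolution, compatibility of augmentations, and exactness of $T$-invariants).
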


\section{Simplicial  de Rham theory}\label{des}
We denote by $A^{\ast}_{p}(n)$ the $\Q$-DGA which is generated by $t_{0},\dots, t_{n}$ of degree $0$ and $dt_{0},\dots, dt_{n}$ of degree $1$ with the relations $t_{0}+\dots+t_{n}=1$ and $dt_{0}+\dots+dt_{n}=0$.
We can regard  $A^{\ast}_{p}(n)$ as the  $\Q$-polynomial  de Rham complex on the standard $n$-simplex $\Delta^{n}$.
We define the map $\int_{\Delta^{n}} : A^{n}_{p}(n)\to \Q$ by the ordinary Riemannian integral.
Let $K$ be a simplicial complex with a universal covering complex $\tilde K$.
We denote by $n(\sigma)$ the dimension of a simplex $\sigma\in K$.
Let $V$ be a finite-dimensional $\Q$-vector space which is a $\pi_{1}(K)$-module.
We denote by  $A^{\ast}_{p}( K, V)$ the space of collections
$\{\omega_{\sigma}\in A^{\ast}_{p}(n(\sigma))\otimes V \}_{\sigma\in \tilde K}$ such that:
\begin{itemize}
\item $\{\omega_{\sigma}\}_{\sigma\in \tilde K}$ are  compatible under restrictions  to faces i.e. $i^{\ast}\omega_{\sigma}=\omega_{\tau}$ for the inclusion $i:\tau\to \sigma$ of a face.
\item $\{\omega_{\sigma}\}_{\sigma\in \tilde K}$ are invariant under the $\pi_{1}(K)$-action i.e.
$\gamma\cdot \omega_{\gamma\sigma}=\omega_{\sigma}$ for any $\gamma\in \pi_{1}(K)$.
\end{itemize}
We call  $A^{\ast}_{p}(K, V)$ the $\Q$-polynomial de Rham complex of $K$ with values in local system $V$.
The space $A^{\ast}_{p}(K, V)$ with the exterior derivation is a cochain complex.
Let $C^{\ast}(K,V)=(C^{\ast}(\tilde K)\otimes V)^{\Gamma}$ be the cochain complex of simplicial cochains with values in the local system $V$.
Define the map $\iota: A^{n}_{p}( K, V)\to C^{n}(K,V)$ such that for $\sigma\in K$ with $n(\sigma)=n$
\[\iota(\{\omega\})(\sigma)=\int_{\Delta^{n}}\otimes {\rm id}_{V} (\omega_{\sigma}).
\]
Then, 
this map is a cochain complex homomorphism and this map induces a cohomology isomorphism
(see \cite[Chapter 9]{GMo},  \cite[Chapter 12-14]{Halp}).

Let $V=\varinjlim V_{i}$ for an inductive system of finite dimensional $\pi_{1}(K)$-modules.
We define
\[A^{\ast}_{p}(K, V)=\varinjlim A^{\ast}_{p}(K, V_{i})
\]

\begin{example}\label{HainDG}{\rm(cf.\cite{Hai}) }
Let $T$ be a reductive $\Q$-algebraic group and $\rho:\pi_{1}(K)\to T$ a representation.
Consider the coordinate ring $\Q[T]$   of $T$.
Then as a $(T,T)$-bimodule,
we have 
\[\Q[T]=\bigoplus V_{\alpha}^{\ast}\otimes V_{\alpha}\]
such that $\{V_{\alpha}\}$ is a set of isomorphism classes of irreducible right $T$-module (\cite[Proposition 3.1]{Hai}).
We regard $\Q[T]$ as a $\pi_{1}(K)$-module by $\rho$.
Then we have 
\[A^{\ast}_{p}(K, \Q[T])=\bigoplus A^{\ast}_{p}(K, V_{\alpha}^{\ast})\otimes V_{\alpha}
\] 
and it is a DGA.

\end{example}


Let $\Gamma$ be a discrete group.
For a $\Gamma$-module $V$, we define the group cohomology $H^{\ast}(\Gamma,V)={\rm Ext}^{\ast}_{\Gamma}(\Q,V)$.
We have the standard resolution.
We denote by $C^{p}(\Gamma,V)$ the set of the $V$-valued functions on $\underbrace{\Gamma\times \dots\times \Gamma }_{p+1}$ with the left-$\Gamma$-action.
Consider the sequence
\[\xymatrix{
V\ar[r]& C^{0}(\Gamma,V)\ar^{d}[r]&C^{1}(\Gamma,V)\ar^{d}[r]&\dots
 }
\] 
such that the first map $V\to C^{0}(\Gamma,V)$ is the embedding as  constant functions and $d:C^{p}(\Gamma,V)\to C^{p+1}(\Gamma,V)$ is given by
\[d\phi (\gamma_{0},\dots,\gamma_{p+1})=\sum (-1)^{i}\phi(\gamma_{0},\dots, {\hat \gamma}_{i},\dots, \gamma_{p+1})
\]
for $\phi\in C^{p}(\Gamma,V)$, $\gamma_{0},\dots, \gamma_{p+1}\in\Gamma$.
Then the group cohomology $H^{\ast}(\Gamma,V)$ is the cohomology of the cochain complex $C^{\ast}(\Gamma,V)^{\Gamma}$.
A representation $\rho:\Gamma\to G$ induces a homomorphism $\rho^{\ast}:H^{\ast}(G,V)\to H^{\ast}(\Gamma,V)$.
By the map $\theta: \left(\bigwedge {\frak u}^{\ast}\otimes V\right)^{T}\to C^{\ast}(G, V)^{G}$ as Section \ref{SAL},
we give a geometric representation of $\rho^{\ast}:H^{\ast}(G,V)\to H^{\ast}(\Gamma,V)$.

We define the acyclic simplicial complex $E\Gamma$ with the free discontinuous $\Gamma$-action so that:
\begin{itemize}
\item For integers $n\ge 0$, simplices of $E\Gamma$ are standard $n$-simplices $\Delta_{(\gamma_{0},\dots,\gamma_{n})}$ indexed by $\Gamma^{n+1}$.
\item \[(t_{1},\dots,t_{i-1},t_{i+1},\dots,t_{n})_{(\gamma_{0},\dots,\gamma_{i-1},\gamma_{i+1},\dots,\gamma_{n})}\in \Delta_{(\gamma_{0},\dots,\gamma_{i-1},\gamma_{i+1},\dots,\gamma_{n})}\] is identified with
 \[(t_{1},\dots,t_{i-1},0,t_{i+1},\dots,t_{n})_{(\gamma_{0},\dots,\gamma_{n})}\in \Delta_{(\gamma_{0},\dots,\gamma_{n})}.\]
\item For $\gamma\in\Gamma$, the action is given by
\[\gamma\cdot (t_{1},\dots,t_{n})_{(\gamma_{0},\dots,\gamma_{n})}=(t_{1},\dots,t_{n})_{(\gamma\gamma_{0},\dots,\gamma\gamma_{n})}.
\]

\end{itemize}
We define  $B\Gamma$ the quotient of $E\Gamma$ by the $\Gamma$-action.
Then the simplicial complex $B\Gamma$ is an Eilenberg-Maclane space $K(\Gamma,1)$.
For a finite dimensional $\Gamma$-module $V$, we consider  the $\Q$-polynomial de Rham complex $A^{\ast}_{p}(B\Gamma,V)$ of $B\Gamma$ with values in local system $V$.
Define the map $\iota:  A^{n}_{p}(B\Gamma,V)\to C^{n}(\Gamma,V)^{\Gamma}$ as 
\[\iota(\{\omega_{\sigma}\}_{\sigma\in B\Gamma} )(\gamma_{0},\dots,\gamma_{n})=\int_{\Delta_{(\gamma_{0},\dots,\gamma_{n})}}\omega_{\Delta_{(\gamma_{0},\dots,\gamma_{n})}}.
\]
Since we can identify $C^{\ast}(\Gamma,V)^{\Gamma}$ with the cochain complex  $C^{\ast}(B\Gamma,V)$, the map $\iota:  A^{p}_{p}(B\Gamma,V)\to C^{p}(\Gamma,V)^{\Gamma}$ induces a cohomology isomorphism
\[H^{\ast}(A^{\ast}_{p}(B\Gamma,V))\cong H^{\ast}(\Gamma,V).
\]
We define the map $\psi:\left(\bigwedge {\frak u}^{\ast}\otimes V\right)^{T}\to  A^{\ast}_{p}(B\Gamma,V)$ such that 
\[\psi(\omega)=\{\sigma^{p}(\rho(\gamma_{0}),\dots,\rho(\gamma_{p}))^{\ast}\omega\}_{\Delta_{(\gamma_{0},\dots,\gamma_{n})}}\]
where $\sigma^{p}(\rho(\gamma_{0}),\dots,\rho(\gamma_{p}))^{\ast}\omega$ is defined in Section \ref{SAL}.
By the $G$-invariance of $\omega\in \left(\bigwedge {\frak u}^{\ast}\otimes V\right)^{T}$   and the relation
\begin{multline*}\sigma^{p}(g_{0},\dots,g_{p})(t_{1},\dots t_{i-1},0,t_{i},\dots t_{p-1})
\\
=\sigma^{p-1}(g_{0},\dots ,g_{i-1},g_{i+1},\dots ,g_{p})(t_{1},\dots, t_{p-1}),
\end{multline*}
we actually have $\psi(\omega)\in  A^{\ast}_{p}(B\Gamma,V)$.
Then 
the map $\psi:\left(\bigwedge {\frak u}^{\ast}\otimes V\right)^{T}\to  A^{\ast}_{p}(B\Gamma,V)$ is a cochain complex homomorphism.

We have the commutative diagram
\[\xymatrix{
\left(\bigwedge {\frak u}^{\ast}\otimes V\right)^{T}\ar[d]^{\psi}\ar[r]^{\theta}&C^{\ast}( G,V)^{G}\ar[d]^{\rho^{\ast}}\\
A^{\ast}_{p}(B\Gamma,V)\ar[r]^{\iota}& C^{p}(\Gamma,V)^{\Gamma}.
}
\]
Hence we have:
\begin{corollary}\label{cocBB}
The induced map $\psi^{\ast}:H^{\ast}({\frak u}, V)^{T}\to  H^{\ast}(A^{\ast}_{p}(B\Gamma,V))$ is identified with the map $\rho^{\ast}:H^{\ast}(G,V)\to H^{\ast}(\Gamma,V)$.
\end{corollary}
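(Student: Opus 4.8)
The plan is to obtain the statement as a purely formal consequence of the commutative square displayed above, once one knows that both horizontal arrows induce isomorphisms on cohomology. Concretely, the corollary asserts that under the isomorphisms induced by $\theta$ and $\iota$ the vertical map $\psi^{\ast}$ becomes $\rho^{\ast}$; this is exactly what a diagram chase in the induced square of cohomology groups will produce, so the real content lies in justifying the commutativity at the cochain level and invoking the two isomorphism theorems already at hand.

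First I would verify that the square commutes already at the level of cochain complexes, i.e. that $\iota\circ\psi=\rho^{\ast}\circ\theta$ as maps $(\bigwedge {\frak u}^{\ast}\otimes V)^{T}\to C^{\ast}(\Gamma,V)^{\Gamma}$. Evaluating on $\omega$ and on a tuple $(\gamma_{0},\dots,\gamma_{p})$, the composite $\rho^{\ast}\circ\theta$ first forms $\theta(\omega)(g_{0},\dots,g_{p})=\int_{\Delta}\sigma^{p}(g_{0},\dots,g_{p})^{\ast}\omega$ and then substitutes $g_{i}=\rho(\gamma_{i})$, giving $\int_{\Delta}\sigma^{p}(\rho(\gamma_{0}),\dots,\rho(\gamma_{p}))^{\ast}\omega$. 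On the other side, $\psi$ assigns to the simplex $\Delta_{(\gamma_{0},\dots,\gamma_{p})}$ precisely the form $\sigma^{p}(\rho(\gamma_{0}),\dots,\rho(\gamma_{p}))^{\ast}\omega$, and $\iota$ integrates this form over $\Delta_{(\gamma_{0},\dots,\gamma_{p})}$, which is the same standard $p$-simplex with the same parameters $(t_{1},\dots,t_{p})$. Hence both composites coincide. (That $\psi(\omega)$ is a legitimate element of $A^{\ast}_{p}(B\Gamma,V)$ has already been recorded: the face-compatibility follows from the relation for $\sigma^{p}$ in Section \ref{SAL}, and the $\Gamma$-invariance from the $G$-invariance of $\omega$ together with the $G$-equivariance of each $\sigma^{p}(\cdot)(t_{1},\dots,t_{p})$.)

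Next I would pass to cohomology. By Theorem \ref{ALGCO} the top arrow $\theta$ induces the isomorphism $\theta^{\ast}:H^{\ast}({\frak u},V)^{T}\to H^{\ast}(G,V)$, and by the simplicial de Rham theorem of Section \ref{des} the bottom arrow $\iota$ induces the isomorphism $\iota^{\ast}:H^{\ast}(A^{\ast}_{p}(B\Gamma,V))\to H^{\ast}(\Gamma,V)$. Applying the cohomology functor to the commuting square yields $\iota^{\ast}\circ\psi^{\ast}=\rho^{\ast}\circ\theta^{\ast}$, and since $\theta^{\ast}$ and $\iota^{\ast}$ are invertible this rearranges to $\psi^{\ast}=(\iota^{\ast})^{-1}\circ\rho^{\ast}\circ\theta^{\ast}$. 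This is precisely the assertion that $\psi^{\ast}$ is identified with $\rho^{\ast}$ under the two isomorphisms.

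There is no deep obstacle here: the substance was already carried out in establishing Theorem \ref{ALGCO} and the simplicial de Rham isomorphism, so what remains is bookkeeping. The part most prone to error is the cochain-level commutativity, where one must confirm that the orientation and parametrization conventions for the integration domain $\Delta$ used in defining $\theta$ agree with those for the simplex $\Delta_{(\gamma_{0},\dots,\gamma_{p})}$ of $B\Gamma$ used in defining $\iota$; once these conventions are aligned, the identity $\iota\circ\psi=\rho^{\ast}\circ\theta$ holds on the nose and the corollary follows immediately.
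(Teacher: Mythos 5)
Your proposal is correct and follows essentially the same route as the paper: the paper likewise establishes that $\psi(\omega)$ is a genuine element of $A^{\ast}_{p}(B\Gamma,V)$, displays the commutative square relating $\theta$, $\psi$, $\iota$, and $\rho^{\ast}$, and then deduces the corollary from Theorem \ref{ALGCO} and the simplicial de Rham isomorphism induced by $\iota$. Your explicit pointwise check that $\iota\circ\psi=\rho^{\ast}\circ\theta$ (both sides being $\int_{\Delta}\sigma^{p}(\rho(\gamma_{0}),\dots,\rho(\gamma_{p}))^{\ast}\omega$) simply spells out what the paper leaves implicit in the diagram.
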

Suppose $V=\Q[T]$.
Then we have $\left(\bigwedge {\frak u}^{\ast}\otimes \Q[T]\right)^{T}=\bigwedge {\frak u}^{\ast}$ and we have the DGA map $\psi:\bigwedge {\frak u}^{\ast}\to A^{\ast}_{p}(B\Gamma,\Q[T])$.
As we notice in Example \ref{HainDG},
in this case Corollary \ref{cocBB} is written as the following statement.
\begin{corollary}\label{Hni}
The  induced map $\psi^{\ast}:H^{\ast}({\frak u}, \Q)\to  H^{\ast}(A^{\ast}_{p}(B\Gamma,\Q[T]))$ is identified with the map \[\rho^{\ast}:\bigoplus H^{\ast}(G,V_{\alpha}^{\ast})\otimes V_{\alpha}\to \bigoplus H^{\ast}(\Gamma,V_{\alpha}^{\ast})\otimes V_{\alpha}.\]

\end{corollary}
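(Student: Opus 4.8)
The plan is to deduce this corollary by specializing Corollary \ref{cocBB} to the coefficient module $V=\Q[T]$ and then unwinding the two identifications hidden in the phrase ``is identified with,'' using the bimodule decomposition recalled in Example \ref{HainDG}. Corollary \ref{cocBB} already tells us that $\psi^{\ast}:H^{\ast}(\frak u,\Q[T])^{T}\to H^{\ast}(A^{\ast}_{p}(B\Gamma,\Q[T]))$ becomes, after the Hochschild isomorphism of Theorem \ref{ALGCO} on the source and the simplicial de Rham isomorphism induced by $\iota$ on the target, the restriction map $\rho^{\ast}:H^{\ast}(G,\Q[T])\to H^{\ast}(\Gamma,\Q[T])$. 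So it remains to rewrite $H^{\ast}(\frak u,\Q[T])^{T}\cong H^{\ast}(G,\Q[T])$ and the two group-type cohomologies in the displayed direct-sum form, and to check that $\rho^{\ast}$ respects the decomposition.

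For the source I would first observe that $\Q[T]$ is pulled back from $T$ along the projection $G=T\ltimes U\to T$, whose kernel is $U$; hence $U$, and therefore $\frak u$, acts trivially on $\Q[T]$. Consequently the Chevalley--Eilenberg differential on $\bigwedge\frak u^{\ast}\otimes\Q[T]$ is $d\otimes\mathrm{id}_{\Q[T]}$, with $d$ the trivial-coefficient differential. Since for any rational $T$-module $W$ the untwisting map gives an identification $(W\otimes\Q[T])^{T}\cong W$, and since $d$ commutes with the adjoint $T$-action on $\bigwedge\frak u^{\ast}$ (as $T$ acts on $\frak u$ by Lie algebra automorphisms), I obtain the stated isomorphism of cochain complexes $\left(\bigwedge\frak u^{\ast}\otimes\Q[T]\right)^{T}\cong\bigwedge\frak u^{\ast}$ with trivial coefficients, so that $H^{\ast}(\frak u,\Q[T])^{T}=H^{\ast}(\frak u,\Q)$. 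This is the source appearing in the statement, and via Theorem \ref{ALGCO} it is carried to $H^{\ast}(G,\Q[T])$.

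For the two cohomologies I would use the $(T,T)$-bimodule decomposition $\Q[T]=\bigoplus_{\alpha}V_{\alpha}^{\ast}\otimes V_{\alpha}$ of Example \ref{HainDG}, in which the left translation action---through which both $G$ (via $G\to T$) and $\Gamma$ (via $\rho$) act---operates only on the factor $V_{\alpha}^{\ast}$ and trivially on $V_{\alpha}$. Because rational cohomology of $G$ and group cohomology of $\Gamma$ both commute with the filtered colimit presenting $\Q[T]$ as $\varinjlim$ of its finite-dimensional submodules, and with tensoring by the fixed finite-dimensional space $V_{\alpha}$, I obtain
\[H^{\ast}(G,\Q[T])=\bigoplus_{\alpha}H^{\ast}(G,V_{\alpha}^{\ast})\otimes V_{\alpha},\qquad H^{\ast}(\Gamma,\Q[T])=\bigoplus_{\alpha}H^{\ast}(\Gamma,V_{\alpha}^{\ast})\otimes V_{\alpha}.\]
Naturality of $\rho^{\ast}$ in the coefficient module then forces it to act as $\bigoplus_{\alpha}\rho^{\ast}\otimes\mathrm{id}_{V_{\alpha}}$, which is precisely the displayed map.

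The step I expect to require the most care is the bookkeeping for passing to the colimit: I must verify that the explicit maps $\psi$, the map $\theta$ of Section \ref{SAL}, and the map $\iota$ are all natural in the coefficient module and commute with the inductive limit used to define $A^{\ast}_{p}(B\Gamma,-)$ on $\varinjlim$-modules, so that applying Corollary \ref{cocBB} summand by summand and reassembling is legitimate and compatible with the untwisting identification of the source. Once this naturality and colimit-compatibility are in place, the corollary follows by assembling the identifications above; the remaining verifications (triviality of the $\frak u$-action and $T$-equivariance of $d$) are routine.
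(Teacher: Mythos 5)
Your proposal follows the same route as the paper's (very terse) argument: specialize Corollary \ref{cocBB} to $V=\Q[T]$, untwist the source via $\left(\bigwedge{\frak u}^{\ast}\otimes\Q[T]\right)^{T}=\bigwedge{\frak u}^{\ast}$, and decompose through the $(T,T)$-bimodule splitting of Example \ref{HainDG}. In substance this is exactly what the paper does, and your verifications of the untwisting (triviality of the $\frak u$-action on $\Q[T]$, $T$-equivariance of the Chevalley--Eilenberg differential) are correct.

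One step, as literally written, is not justified and is false in the stated generality. You apply Corollary \ref{cocBB} verbatim to the infinite-dimensional module $\Q[T]$, and you then assert that ``group cohomology of $\Gamma$ commutes with the filtered colimit presenting $\Q[T]$.'' In Section \ref{des} the group $\Gamma$ is an arbitrary discrete group, and $H^{\ast}(\Gamma,-)$ commutes with filtered colimits (equivalently here, with the infinite direct sum $\bigoplus_{\alpha}V_{\alpha}^{\ast}\otimes V_{\alpha}$) only under finiteness hypotheses such as $\Gamma$ being of type $FP_{\infty}$; moreover Corollary \ref{cocBB} itself is only established for finite-dimensional $V$, since the simplicial de Rham isomorphism induced by $\iota$ is quoted for finite-dimensional local systems and $A^{\ast}_{p}(B\Gamma,\Q[T])$ is \emph{defined} as a colimit. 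This is precisely why the corollary's target is written as $\bigoplus H^{\ast}(\Gamma,V_{\alpha}^{\ast})\otimes V_{\alpha}$ rather than $H^{\ast}(\Gamma,\Q[T])$: the statement is designed so that no commutation of $H^{\ast}(\Gamma,-)$ with colimits is ever needed. The repair is already contained in your final paragraph: decompose at the cochain level, $A^{\ast}_{p}(B\Gamma,\Q[T])=\bigoplus A^{\ast}_{p}(B\Gamma,V_{\alpha}^{\ast})\otimes V_{\alpha}$, apply Corollary \ref{cocBB} to each finite-dimensional $V_{\alpha}^{\ast}$, and reassemble; on the $G$-side the analogous commutation is unproblematic because $C^{p}(G,V)=\Q[G^{p+1}]\otimes V$ commutes with direct sums, so $H^{\ast}(G,\Q[T])=\bigoplus H^{\ast}(G,V_{\alpha}^{\ast})\otimes V_{\alpha}$ holds. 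If you reorganize the argument so that $H^{\ast}(\Gamma,\Q[T])$ is never mentioned, your proof is correct and coincides with the paper's.
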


\section{Classifying spaces of torsion-free virtually polycyclic groups}\label{poll}
Let $\Gamma$ be a torsion-free virtually polycyclic group,
  $G$ a $\Q$-algebraic group and  $\rho:\Gamma\to  G$ a representation with the Zariski-dense image.
It is known that we have $\dim U\le {\rm rank}\,\Gamma$ where $U$ is the  unipotent radical of $G$.
We say that  $\rho:\Gamma\to  G$ is a full representation if $\dim U= {\rm rank}\,\Gamma$.
\begin{theorem}[\cite{KCT}]\label{kasct}
If $\rho:\Gamma\to  G$ is an injective full representation, then for any rational $G$-module $V$ the induced map 
 $\rho^{\ast}:H^{\ast}(G,V)\to H^{\ast}(\Gamma,V)$
is an isomorphism.
\end{theorem}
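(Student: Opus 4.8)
The plan is to compare the two cohomologies by a dévissage along a subnormal series of $\Gamma$, the engine being a one--dimensional comparison between $\exp(\nu)-1$ and $\nu$ for a nilpotent endomorphism $\nu$. First I would make three reductions. Using Theorem \ref{ALGCO} I replace the source by $H^{\ast}({\frak u},V)^{T}$, so that the assertion becomes a Nomizu--type statement $H^{\ast}({\frak u},V)^{T}\cong H^{\ast}(\Gamma,V)$. Since $\rho(\Gamma)$ is Zariski--dense in $G$, every rational $G$--module satisfies $V^{G}=V^{\rho(\Gamma)}=V^{\Gamma}$, so $\rho^{\ast}$ is an isomorphism in degree $0$. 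Finally, because we work over $\Q$ (characteristic $0$), a finite--index torsion--free polycyclic subgroup $\Gamma_{0}\le\Gamma$ computes both sides through invariants, i.e. $H^{\ast}(\Gamma,V)=H^{\ast}(\Gamma_{0},V)^{\Gamma/\Gamma_{0}}$ by transfer, with the matching statement on the algebraic side; this reduces us to a strongly polycyclic $\Gamma$ equipped with a series $\Gamma=\Gamma_{0}\supset\Gamma_{1}\supset\cdots$ with $\Gamma_{i-1}/\Gamma_{i}\cong\Z$.

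The induction is on $\mathrm{rank}\,\Gamma=\dim U$, the statement at each rank being asserted for all rational modules. In the base case $\dim U=1$ the reductions force $G=\mathbb{G}_{a}$ and $\Gamma=\Z\hookrightarrow\mathbb{G}_{a}(\Q)$, the generator acting on a rational module by $\exp(\nu)$ with $\nu$ nilpotent; then $H^{\ast}(\Z,-)$ is the cohomology of $V\xrightarrow{\exp(\nu)-1}V$ and $H^{\ast}(\mathbb{G}_{a},-)=H^{\ast}({\frak u},-)$ that of $V\xrightarrow{\nu}V$, and since $\exp(\nu)-1=\nu\,u$ with $u$ invertible the two agree. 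For the inductive step I would use the extension $1\to\Gamma_{1}\to\Gamma\to\Z\to1$: the Zariski closure $G_{1}$ of $\rho(\Gamma_{1})$ is normal in $G$, is the algebraic hull of $\Gamma_{1}$ (so $\rho|_{\Gamma_{1}}$ is again injective, dense and full), and the quotient $Q:=G/G_{1}$ is the algebraic hull of $\Z$. I then compare, via the morphism induced by $\rho$, the Hochschild--Serre spectral sequences
\[E_{2}^{p,q}(\Gamma)=H^{p}\!\left(\Z,H^{q}(\Gamma_{1},V)\right)\Rightarrow H^{p+q}(\Gamma,V),\qquad E_{2}^{p,q}(G)=H^{p}\!\left(Q,H^{q}(G_{1},V)\right)\Rightarrow H^{p+q}(G,V),\]
the right one being the Hochschild--Mostow sequence of an extension of algebraic groups. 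The inductive hypothesis gives the isomorphism on the coefficient modules $H^{q}(G_{1},V)\to H^{q}(\Gamma_{1},V)$, and the base case, applied to the rank--one quotient $\Z\to Q$ with coefficients in the rational $Q$--module $M=H^{q}(G_{1},V)$, gives the isomorphism in the $p$--direction. Here $Q=T_{Q}\ltimes\mathbb{G}_{a}$ and the generator of $\Z$ acts by a Jordan factor $s\exp(\nu)$; decomposing $M$ into weight spaces of the semisimple part $s$ (which generates $T_{Q}$ densely), every nontrivial weight space is acyclic on both sides, because $s-1$ is invertible while the space is not $T_{Q}$--invariant, and on the trivial--weight summand the comparison is again $\exp(\nu)-1$ versus $\nu$. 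Thus $\rho$ is an isomorphism on $E_{2}$, hence on the abutment.

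The main obstacle, and the point where fullness is indispensable, is precisely the compatibility of filtrations needed to set up this pair of extensions. The decomposition $H^{\ast}(G,V)=H^{\ast}({\frak u},V)^{T}$ is computed along a $T$--stable $\Q$--rational central series of $U$, whereas $H^{\ast}(\Gamma,V)$ wants a $\Gamma$--adapted $\Q$--rational filtration; these need not coincide, since a $T$--stable central line of $U$ typically meets $\Gamma$ trivially because the eigendirections of the semisimple parts are irrational relative to $\Gamma$. Hence one cannot simply dévissage by a single central $\mathbb{G}_{a}$, and the real work is to verify, using the structure theory of algebraic hulls, that the closure $G_{1}$ of $\rho(\Gamma_{1})$ is genuinely the hull of $\Gamma_{1}$ with $\dim U_{G_{1}}=\mathrm{rank}\,\Gamma_{1}$, so that fullness is inherited and the inductive hypothesis applies. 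It is exactly the equality $\dim U=\mathrm{rank}\,\Gamma$ that forces each $\Z$--generator to contribute a genuine unipotent direction $\nu$ realizing a coordinate of ${\frak u}$, making the trivial--weight comparison $\exp(\nu)-1\leftrightarrow\nu$ nondegenerate; without fullness the residual semisimple action would create higher group cohomology with no algebraic counterpart, and $\rho^{\ast}$ would fail to be surjective in top degrees. The reductive contribution itself is harmless, entering only through $T$--invariants by reductivity in characteristic $0$.
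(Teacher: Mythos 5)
The paper itself contains no proof of Theorem \ref{kasct}: it is quoted from \cite{KCT}, so there is nothing internal to compare your argument with, and I judge it on its own terms. Your overall strategy --- d\'evissage along a polycyclic series, comparing the Hochschild--Serre spectral sequence of $1\to\Gamma_{1}\to\Gamma\to\Z\to 1$ with the corresponding spectral sequence of $1\to G_{1}\to G\to Q\to 1$ in rational cohomology, with the rank-one comparison ($\exp(\nu)-1$ versus $\nu$, plus acyclicity of nontrivial weight spaces) as the engine --- is a viable route. But two load-bearing steps are not in place. First, your base case is false as stated: $\mathrm{rank}\,\Gamma=1$ does not force $G=\mathbb{G}_{a}$. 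Take $s\in GL_{m}(\Z)$ semisimple of infinite order, $T=\overline{\langle s\rangle}$, and $\rho:\Z\to T\times\mathbb{G}_{a}$, $1\mapsto(s,1)$; this is injective, full, and Zariski-dense (a proper closed subgroup projecting onto both factors would be the graph of a nontrivial homomorphism $T\to\mathbb{G}_{a}$, impossible for $T$ diagonalizable), yet $T\neq 1$; the paper's Example in Section \ref{Exss} is exactly of this shape. Your inductive step then appeals to ``the base case'' for the quotient $Q=T_{Q}\ltimes\mathbb{G}_{a}$ --- precisely the situation your base case excludes. The weight-space argument you sketch there is the correct content of the true rank-one case, but as the proof is organized, the rank-one statement with nontrivial reductive part is never actually established: it is used in the induction and deferred to a base case that does not contain it.

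Second, the step you yourself call ``the real work'' --- that $\rho|_{\Gamma_{1}}:\Gamma_{1}\to G_{1}=\overline{\rho(\Gamma_{1})}$ is again injective, dense and full, and that $\Z\to Q$ is injective and full --- is asserted, not proved, and it is exactly here that the hypothesis $\dim U=\mathrm{rank}\,\Gamma$ is consumed; without it the induction has no legs. It can be proved: $U_{G_{1}}$ is characteristic in $G_{1}$, hence normal in $G$, hence contained in $U_{G}$, so $U_{G_{1}}=(U_{G}\cap G_{1})^{\circ}$; since the image of $\Z$ is Zariski-dense in $Q$, \cite[Lemma 4.36]{R} gives $\dim U_{Q}\le 1$, hence
\[\dim U_{G_{1}}=\dim\left(U_{G}\cap G_{1}\right)\ge\dim U_{G}-\dim U_{Q}\ge\mathrm{rank}\,\Gamma-1=\mathrm{rank}\,\Gamma_{1},\]
and the reverse inequality is the same lemma applied to $\Gamma_{1}\to G_{1}$; this also forces $\dim U_{Q}=1$, and injectivity of $\Z\to Q$ follows because otherwise the image of $\Z$ in $Q$ would be finite, making $Q$ finite and contradicting $\dim U_{Q}=1$. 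Supplying this lemma --- and dropping the overstated claim that $G_{1}$ is the algebraic hull of $\Gamma_{1}$, since the hull's condition $Z_{G_{1}}(U_{G_{1}})\subseteq U_{G_{1}}$ neither follows nor is needed, fullness being enough --- would close the main gaps, after which your spectral-sequence comparison goes through.
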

We suppose that $\rho:\Gamma\to  G$ is an injective full representation.
Denote by $\frak u$  the Lie algebra of  the unipotent radical  $U$ of $G$.
Take a splitting $G=T\ltimes U$ for a maximal reductive subgroup $T$.
Then by Corollary \ref{cocBB} and Theorem \ref{kasct}, we have the following fact.

\begin{theorem}\label{indisoinn}
 The map $\psi:\left(\bigwedge {\frak u}^{\ast}\otimes V\right)^{T}\to  A^{\ast}_{p}(B\Gamma,V)$ induces a cohomology isomorphism.
\end{theorem}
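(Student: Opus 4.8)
The plan is to deduce the statement formally from the comparison isomorphisms already established, closing them against the full-representation rigidity of Theorem \ref{kasct}. The organizing device is the commutative square
\[\xymatrix{
\left(\bigwedge {\frak u}^{\ast}\otimes V\right)^{T}\ar[d]^{\psi}\ar[r]^{\theta}&C^{\ast}( G,V)^{G}\ar[d]^{\rho^{\ast}}\\
A^{\ast}_{p}(B\Gamma,V)\ar[r]^{\iota}& C^{\ast}(\Gamma,V)^{\Gamma}
}
\]
set up in Section \ref{des}. First I would record that both horizontal arrows induce cohomology isomorphisms: the top arrow $\theta$ by Theorem \ref{ALGCO} (the cochain-level Hochschild isomorphism $H^{\ast}({\frak u},V)^{T}\cong H^{\ast}(G,V)$), and the bottom arrow $\iota$ by the simplicial de Rham comparison recalled in Section \ref{des} (giving $H^{\ast}(A^{\ast}_{p}(B\Gamma,V))\cong H^{\ast}(\Gamma,V)$). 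Passing the square to cohomology and inverting the horizontal isomorphisms then transports the right vertical map to the left one; this identification of $\psi^{\ast}$ with $\rho^{\ast}:H^{\ast}(G,V)\to H^{\ast}(\Gamma,V)$ is precisely the content of Corollary \ref{cocBB}, which I would cite directly rather than re-derive.

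With this identification in hand, the second and final step is to invoke the full-representation hypothesis. Since $\rho:\Gamma\to G$ is assumed injective and full and $V$ is a rational $G$-module, Theorem \ref{kasct} asserts that $\rho^{\ast}:H^{\ast}(G,V)\to H^{\ast}(\Gamma,V)$ is an isomorphism. Combining this with the identification of $\psi^{\ast}$ and $\rho^{\ast}$ from the first step shows that $\psi^{\ast}:H^{\ast}({\frak u},V)^{T}\to H^{\ast}(A^{\ast}_{p}(B\Gamma,V))$ is an isomorphism, which is the assertion of Theorem \ref{indisoinn}.

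I do not expect a serious obstacle here, since both substantive inputs—the explicit isomorphism $\theta$ of Theorem \ref{ALGCO} and the rigidity statement of Theorem \ref{kasct}—are already available; the argument is a diagram chase. The only points requiring care are bookkeeping ones: one must confirm that $V$ enters genuinely as a rational $G$-module, so that Theorem \ref{kasct} applies verbatim, and that the two horizontal comparison isomorphisms used to set up Corollary \ref{cocBB} are the same ones under which the identification of $\psi^{\ast}$ with $\rho^{\ast}$ is made, so that commutativity of the square legitimately transports the property of being an isomorphism from the right edge to the left. No new construction or estimate is needed beyond assembling these already-proven facts.
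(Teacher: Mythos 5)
Your proposal is correct and follows exactly the paper's own argument: the paper derives Theorem \ref{indisoinn} precisely by combining Corollary \ref{cocBB} (the identification of $\psi^{\ast}$ with $\rho^{\ast}$ via the commutative square) with Theorem \ref{kasct} (that $\rho^{\ast}$ is an isomorphism for an injective full representation). Your bookkeeping remarks about $V$ being a rational $G$-module and the consistency of the comparison maps are sensible but raise no issues beyond what the paper already handles.
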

Suppose  $V=\Q[T]$ as a $\Gamma$-module.
By Corollary \ref{Hni}, we obtain the following result.

\begin{theorem}\label{polmi}
We have an explicit DGA map $\bigwedge {\frak u}^{\ast}\to  A^{\ast}_{p}(B\Gamma,\Q[T])$ which induces a cohomology isomorphism.
Hence $\bigwedge{\frak u}^{\ast}$ is the minimal model of $A^{\ast}_{p}(B\Gamma,\Q[T])$.
\end{theorem}

\section{Examples and applications to symplectic and contact geometry}\label{Exss}

 An infra-solvmanifold is a manifold of the form $G/\Delta$, where $G$ is a simply connected solvable Lie group, and $\Delta$ is a torsion-free subgroup of ${\rm Aut}(G)\ltimes G$ such that the closure of $h(\Delta)$ in ${\rm Aut}(G)$ is compact where $h:{\rm Aut}(G)\ltimes G\to {\rm Aut}(G)$ is the projection.
An infra-solvmanifold is a  generalization of a solvmanifold.
An infra-solvmanifold is a aspherical manifold with a virtually polycyclic group.

Let $\Gamma$ be a torsion-free virtually polycyclic group.
Then there exists a compact infra-solvmanifold $M$ with the fundamental group $\Gamma$ (see \cite{B}).
It is known that every compact infra-solvmanifold is smoothly rigid   (\cite[Corollary 1.5]{B}).
Hence we have the canonical correspondence between   torsion-free virtually polycyclic groups $\Gamma$ and  infra-solvmanifolds $M_{\Gamma}$ with $\pi_{1}(M_{\Gamma})=\Gamma$. 
Hence by Theorem \ref{indisoinn}, we can study the cohomology of a infra-solvmanifolds $M_{\Gamma}$.

For a  torsion-free virtually polycyclic group $\Gamma$ and a full representation $\rho:\Gamma\to G$,
take  a splitting $G=T\ltimes U_{\Gamma}$ for a maximal reductive subgroup $T$.
Denote by $\frak u$  the Lie algebra of  the unipotent radical $U$ of $G$.
Then by Theorem \ref{indisoinn}, the cohomology of the DGA 
$\left(\bigwedge {\frak u}^{\ast}\right)^{T}$ is isomorphic to $H^{\ast}(M_{\Gamma},\Q)$.
By this isomorphism, we can find an integral symplectic form on the compact infra-solvmanifold $M_{\Gamma}$.

\begin{proposition}
If there exists a $2$-form $\omega\in \left(\bigwedge^{2} {\frak u}^{\ast}\right)^{T}$ such that $d\omega=0$ and $\omega$ is non-degenerate,
 then the   compact infra-solvmanifold $M_{\Gamma}$ admits a integral symplectic form.
\end{proposition}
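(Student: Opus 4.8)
The plan is to turn the algebraic datum $\omega$ into an honest symplectic form on $M_\Gamma$ and then to force its cohomology class to be integral by means of the rational isomorphism of Theorem~\ref{indisoinn}. First I would use the model of $M_\Gamma$ furnished by the algebraic hull (cf.\ \cite{B}): restricting the homomorphism $\alpha:G\to {\rm Aut}(U)\ltimes U$ of Section~\ref{SAL} to $\rho(\Gamma)$ yields a free, properly discontinuous, cocompact affine action of $\Gamma$ on $U(\R)$, whose quotient is a compact infra-solvmanifold with fundamental group $\Gamma$ and hence, by the rigidity recalled above, is $M_\Gamma$. Because $\rho$ is full we have $\dim U={\rm rank}\,\Gamma=\dim M_\Gamma$, and since $U$ is unipotent the exponential identifies $U(\R)$ with $\frak u_\R$; in this model the translation part of the $\Gamma$-action is by left translations of $U$, while its linear part is the image of the projection $\Gamma\to T$ acting on $U$ by automorphisms.

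Next I would descend $\omega$. Through the identification of $\bigwedge\frak u^\ast$ with the left-invariant forms on $U(\R)$, the element $\omega$ gives a left-$U$-invariant $2$-form $\tilde\omega$; the hypothesis $\omega\in\left(\bigwedge^2\frak u^\ast\right)^T$ says precisely that $\tilde\omega$ is in addition invariant under the linear part of the action, so $\tilde\omega$ is invariant under the whole affine $\Gamma$-action and descends to a $2$-form on $M_\Gamma$. The condition $d\omega=0$ in the Chevalley--Eilenberg complex makes $\tilde\omega$ closed, and the non-degeneracy of $\omega$ makes $\tilde\omega^{\,n}$ (with $2n=\dim\frak u$) nowhere vanishing; hence the descended form is symplectic.

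To obtain integrality I would first arrange $\omega$ to be rational. Since $G$ is a $\Q$-group, $\frak u$ carries a $\Q$-structure and $\left(\bigwedge^2\frak u^\ast\right)^T$ is $\Q$-defined; as the closed forms therein form a $\Q$-subspace with dense rational points and non-degeneracy is an open condition, a rational closed non-degenerate $T$-invariant $2$-form exists as soon as one real one does, and I would take $\omega$ to be such a form. By Theorem~\ref{indisoinn} with $V=\Q$, the map $\psi$ identifies $H^\ast\big(\left(\bigwedge\frak u^\ast\right)^T\big)$ with $H^\ast(A^\ast_p(B\Gamma,\Q))\cong H^\ast(\Gamma,\Q)\cong H^\ast(M_\Gamma,\Q)$ over $\Q$, so $[\omega]$ is carried to a rational class; clearing denominators, which preserves both closedness and non-degeneracy, then yields a symplectic form representing a class in the image of $H^2(M_\Gamma,\Z)\to H^2(M_\Gamma,\R)$.

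The main obstacle is a compatibility between two models of the cohomology. I must check that the smooth de Rham class of the descended form $\tilde\omega$ agrees, under $H^2_{dR}(M_\Gamma)\cong H^2(\Gamma,\R)$, with the class that $\omega$ represents through $\iota\circ\psi$ in the simplicial $\Q$-polynomial complex of $B\Gamma$; only then does the rationality established on the polynomial side transfer to the de Rham class of $\tilde\omega$. Both $\psi$ and the descent produce their forms by pulling $\omega$ back along the same maps $\sigma^p(\rho(\gamma_0),\dots,\rho(\gamma_p))$ into $U$, so the periods of $\tilde\omega$ over the simplices of $E\Gamma$ should match the values of $\iota(\psi(\omega))$; the delicate part is to compare a smooth triangulation of $M_\Gamma$ with the simplicial model $B\Gamma$ carefully enough that these period computations coincide exactly.
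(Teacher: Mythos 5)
Your construction of the symplectic form itself is sound, and it takes a genuinely different route from the paper: the paper never descends $\omega$ to a differential form on $M_{\Gamma}$ at all. Its proof is purely cohomological: via Theorem \ref{indisoinn} (and asphericity of $M_{\Gamma}$) the element $\omega$ determines a class $[\omega]\in H^{2}(M_{\Gamma},\Q)$; since $\psi$ is multiplicative and a nonzero top-degree element of $\bigwedge{\frak u}^{\ast}$ is never exact for the nilpotent Lie algebra $\frak u$, non-degeneracy gives $[\omega]^{n}\not=0$; the paper then cites \cite{Ksy}, which says that on such manifolds any real class $a$ of degree $2$ with $a^{n}\not=0$ is represented by an honest symplectic form, applies it to the rational class $[\omega]$, and clears denominators. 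So the paper buys existence of the form from a cited theorem and gets rationality for free, whereas you build the form by hand from Baues's affine model \cite{B} and must then pay for rationality separately.

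That payment is where your proof has a genuine gap: the whole content of the proposition is the integrality of the class, and your last paragraph only \emph{names} the required compatibility between the smooth de Rham class of the descended form $\tilde\omega$ and the rational class of $\psi(\omega)$, without proving it. As written, you have produced a symplectic form on $M_{\Gamma}$ with no control on its cohomology class. The gap is fillable, and more easily than your appeal to "comparing a smooth triangulation of $M_{\Gamma}$ with $B\Gamma$" suggests: the face relation $\sigma^{p}(g_{0},\dots,g_{p})(t_{1},\dots,t_{i-1},0,t_{i},\dots,t_{p-1})=\sigma^{p-1}(g_{0},\dots,g_{i-1},g_{i+1},\dots,g_{p})(t_{1},\dots,t_{p-1})$ says precisely that the maps $\sigma^{p}(\rho(\gamma_{0}),\dots,\rho(\gamma_{p}))$ glue to a single $\Gamma$-equivariant map $E\Gamma\to U(\R)$; since both spaces are contractible and both actions are free and properly discontinuous, it descends to a homotopy equivalence $F:B\Gamma\to M_{\Gamma}$, and by construction $F^{\ast}\tilde\omega=\psi(\omega)$ simplex by simplex. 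Hence $F^{\ast}[\tilde\omega]=[\psi(\omega)]\in H^{2}(B\Gamma,\Q)$, so $[\tilde\omega]$ is rational and scaling finishes the argument, with no auxiliary triangulation needed. Two smaller points: your step "arranging $\omega$ to be rational" is vacuous, since in this paper $\frak u$ is the Lie algebra of the $\Q$-algebraic group $U$ and $\left(\bigwedge^{2}{\frak u}^{\ast}\right)^{T}$ is already a $\Q$-vector space, so the hypothesis hands you a rational $\omega$; and your descent uses the algebraic hull, while the proposition is stated for an arbitrary injective full representation $\rho:\Gamma\to G$, so strictly you should either reduce to the hull or check that Baues's affine action has the required properties for the given $G$.
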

\begin{proof}
We suppose that ${\rm rank}\, \Gamma=2n$.
It is sufficient to show that $M_{\Gamma}$ admits a symplectic form $\alpha$ such that $[\alpha]\in H^{2}(M_{\Gamma},\Q)$.
It is known that if we have $a\in H^{2}(M_{\Gamma},\R)$ so that $a^{n}\not=0$, then we have a symplectic form $\alpha$ which is a representative of $a$ (see \cite{Ksy}).

Now we suppose that
 there exists a $2$-form $\omega\in \left(\bigwedge^{2} {\frak u}^{\ast}\right)^{T}$ such that $d\omega=0$ and $\omega$ is non-degenerate.
Consider the cohomology class $[\omega]\in H^{2}(M_{\Gamma},\Q)$.
Then, since $\omega\in \left(\bigwedge^{2} {\frak u}^{\ast}\right)^{T}$ is non-degenerate, we have $[\omega]^{n}\not=0$.
Hence, taking a symplectic form which is a representative of $[\omega]$, we can prove the proposition.
\end{proof}

\begin{example}
Let $A$ be a semi-simple matrix such that $A\in SL_{m}(\Z)$ and for any non-zero integer $n$ we suppose $A^{n}\not=1$ where $1$ is the unit matrix.
Consider the semi-direct product $\Gamma=\Z\ltimes \Z^{m}$ such that 
the action of $\Z$ on $\Z^{m}$ is given by 
$\Z\ni t\mapsto A^{t}\in {\rm Aut}(\Z^{m})$.
The group $\Gamma$ is torsion-free polycyclic of rank $m+1$.
Take $B=\left(
\begin{array}{ccccccc}
1&0\\
0&A
\end{array}
\right) \in  SL_{m+1}(\Z)$.
Let $T$ be the Zariski-closure of $\langle B\rangle $ in $SL_{m+1}(\Q)$.
By the assumption, $T$ is diagonalizable.
consider the $\Q$-algebraic group $G=T\ltimes \Q^{m+1}$ with the unipotent radical $\Q^{m+1}$.
We have the injective homomorphism
\[\rho: \Gamma \ni (t,v)\mapsto (B^{t},t,v)\in G
\]
where $t\in \Z$ and $v\in \Z^{m}$.
Then we can easily check that $\rho(\Gamma)$ is Zariski-dense in $G$ and so $\rho$ is full.  
Thus by  Theorem \ref{indisoinn}, we have an isomorphism
\[H^{\ast}(M_{\Gamma},\Q)\cong \left(\bigwedge \Q^{m+1}\right)^{T}=\left(\bigwedge \Q^{m+1}\right)^{\langle B\rangle}.
\]

Additionally, we assume that 
\[A=\left(
\begin{array}{ccccccc}
1&&&\\
&A_{1}&&\\
&&\ddots&\\
&&&A_{k}
\end{array}\right)
\]
such that $A_{1},\dots, A_{k}\in SL_{2}(\Z)$ and they are semi-simple.
By the above assumption, some $A_{i}$ satisfies $A^{n}_{i}\not=1$ for any non-zero integer $n$.
Then, taking the standard basis $e_{1}\dots e_{2k+2}$ of $\Q^{2k+2}$,
we obtain the non-degenerate two form $\omega\in  \left(\bigwedge^{2} \Q^{2k+2}\right)^{\langle B\rangle}$
such that 
\[\omega=e_{1}\wedge e_{2}+\dots +e_{2k+1}\wedge e_{2k+2}.
\]
Hence in this case $M_{\Gamma}$ admits a integral  symplectic form $\omega$.
By  \cite[Proposition 1.1, Proposition 1.4]{Kfo}, we can say that $M_{\Gamma}$ is formal in the sense of Sullivan (\cite{DGMS}, \cite{Sul}) and $(M_{\Gamma},\omega)$ satisfies the hard Lefschetz property.
On the other hand, since the matrix $A$ does not have a finite period, $\Gamma$ is not virtually nilpotent and hence by the result in \cite{AN} $M_{\Gamma}$ does not admit a K\"ahler structure.
Using integral symplectic form $\omega$, we construct the following examples:
\begin{newexample}

Since $M_{\Gamma}$ is $2k+2$-dimensional manifold with integral symplectic form $\omega$,
we have a symplectic emmbedding $M_{\Gamma}\to \C P^{N}$ for $N\ge 2k+3$ (\cite{Gro}, \cite{Ts}).
Hence, as McDuff's construction \cite{Mcd}, we obtain the symplectic blow-up $X_{\Gamma}$ of $\C P^{N}$ along $M_{\Gamma}$.
By \cite[Theorem 2.2]{Cav}, the symplectic blow-up $X_{\Gamma}$ satisfies the hard Lefschetz property.
Moreover, in \cite{LS}, it is announced that the blow-up along a manifold $M$ symplectically embedded in a large enough complex projective space is formal if and only if $M$ is formal.
If this fact true, then $X_{\Gamma}$ is formal and hence we obtain a formal and hard Lefschetz symplectic manifolds such that we do not know whether they admit K\"ahler structures.
\end{newexample}
\begin{newexample}
By the integral symplectic form $\omega$ on $M_{\Gamma}$, 
we obtain the principal circle bundle $P_{\Gamma}\to M_{\Gamma}$ associated with $[\omega]\in H^{2}(M_{\Gamma},\Z)$.
It is known that $P_{\Gamma}$ is a regular contact manifold.
Recently, the hard Lefschetz property on contact manifolds is defined. (see \cite{CNY}, \cite{Li})
In \cite{Li}, it is proved that the contact manifold $P_{\Gamma}$ satisfies hard Lefschetz property if and only if the symplectic manifold $M_{\Gamma}$ satisfies hard Lefschetz property.
Hence $P_{\Gamma}$ satisfies hard Lefschetz property.
In \cite{KSS}, it is proved that polycyclic fundamental groups of  compact Sasakian manifolds are virtually nilpotent.
Since $\Gamma$ is not virtually nilpotent, the fundamental group of $P_{\Gamma}$ is  not virtually nilpotent.
Hence $P_{\Gamma}$ does not admit a Sasakian structure.

\end{newexample}
\end{example}

\end{document}